\documentclass[12pt]{article}
\usepackage{amssymb}
\usepackage{amsthm}
\topmargin-1cm
\oddsidemargin0cm
\evensidemargin0cm
\textwidth17cm
\textheight24.5cm
\sloppy

\newtheorem{theorem}{Theorem}
\newtheorem{proposition}{Proposition}
\newtheorem{corollary}{Corollary}
\newtheorem{lemma}{Lemma}
\newtheorem{definition}{Definition}
\newtheorem{remark}{Remark}
\newtheorem{example}{Example}

\newcommand{\be}{\begin{equation}}
\newcommand{\ee}{\end{equation}}

\newcommand{\R}{\mathbb R}

\newcommand{\E}{\mathbb E}
\newcommand{\Estar}{\mathbb E^*}
\newcommand{\f}{f: X\to\R}
\newcommand{\scalpr}[2]{\langle#1,#2\rangle}
\newcommand{\norm}[1]{\Arrowvert#1\Arrowvert}
\newcommand{\CR}[1]{#1^\uparrow}
\newcommand{\CRsubd}[1]{\partial^\uparrow #1}
\newcommand{\subd}[1]{\partial #1}
\newcommand{\eps}{\varepsilon}
\begin{document}

\title{Characterizations of Pseudolinear and Semistrictly Quasilinear Functions}
\author{ Vsevolod I. Ivanov \\
Department of Mathematics, \\
Technical University of Varna,  9010 Varna, Bulgaria, \\
email: vsevolodivanov@yahoo.com
}



\maketitle

\begin{abstract}

In this paper, we obtain several new  complete characterizations of pseudolinear functions. Two of the results are of first-order and one is derivative free.  The results are derived in terms of the Clarke-Rockafellar subdifferential. Additionally, we prove a characterization of the semistrictly quasilinear functions. It is similar to the derivative free characterization of the pseudolinear functions. We also find the conditions such that a semistrictly quasilinear function become pseudolinear. 

Keywords: Pseudolinear functions, \and semistrictly quasiliear functions, \and nonsmooth analysis, \and conditions for pseudoliearity and semistrict quasiliearity 

Mathematical Subject Classification (2020): 26B25, \and 49J52
\end{abstract}

\section{Introduction}
\label{intro}
The concepts of pseudolinearity and semistrict quasilinearity provide in a very natural way generalizations of the linearity. These classes  include many useful functions. Among them are, for instance, the linear fractional functions.
Several interesting properties of these of pseudolinear functions appeared in
 \cite{che84,kom93,kor67,rap91}. More generalized convex functions have been studied for example in  the books \cite{cam09,gio04,man69,mar75,mis15}. Quasilinear functions have been studied in \cite{mar75} under the name quasimonotone functions.

In this paper, we obtain more characterizations of pseudolinear functions.  
In Theorem \ref{pl-th3}, we obtain necessary conditions for pseudolinearity, but they are not sufficient. The respective necessary and sufficient conditions are obtained in Theorem 4. Theorem \ref{pl-th1} is well known and we prove it for reader's convenience. Really it is a  generalization to non-differentiable functions of a theorem due to Chew and Choo \cite{che84}. 
 It is interesting which is the largest class of functions such that the conditions of Theorem 3 hold. We prove that they become necessary and sufficient when the function is  semistrictly quasilinear, which implies that the mentioned class is the exact one.
Every pseudolinear function, which is subdifferentiable with respect to the Clarke-Rockafellar subdifferential, is semistrictly quasilinear. We also find the conditions such that a  semistrictly quasilinear function have to satisfy to become pseudolinear.



In the sequel, we suppose that $\E$ is a Banach space. We denote by $\Estar$ its dual and the duality pairing between the vectors $a\in\Estar$ and $b\in\E$ by $\scalpr{a}{b}$, by $\R$ the set of reals, by $\overline\R$ the union $\R\cup\{+\infty\}$, by $B(x,r)$ the closed ball of a center $x$ with a radius $r$. Let $\f$ be a proper extended real-valued function, whose domain is the set 
\[
{\rm dom}(f):=\{x\in \E\mid f(x)<+\infty\}.
\]


\begin{definition}
Let $\f$ be a proper extended real-valued function and $x\in{\rm dom}(f)$. The Clarke-Rocka\-fel\-lar generalized derivative of $f$ at $x$ in direction $v$ is defined by
\[
\CR f(x,v)=\sup_{\eps>0}\,\limsup_{(y,\alpha)\downarrow {}_f x; t\downarrow 0}\,\inf_{u\in B(v,\eps)}\,[f(y+tu)-\alpha]/t,
\]
where $(y,\alpha)\downarrow {}_f x$ means that $y\to x$, $\alpha\to f(x)$, $\alpha\ge f(y)$ (see {\rm \cite{roc80}}), and $y\to x$ implies that the norm $\norm{y-x}$ approaches $0$. If $f$ happens to be lower semicontinuous at $x$ the definition can be expressed in the slightly simpler form
\[
\CR f(x,v)=\sup_{\eps>0}\,\limsup_{y\downarrow {}_f x; t\downarrow 0}\,\inf_{u\in B(v,\eps)}\,[f(y+tu)-f(y)]/t,
\]
where $y\downarrow {}_f x$ means that $y\to x$, $f(y)\to f(x)$. When $f$ is locally Lipschitz, this derivative coincides with the Clarke generalized derivative \cite{cla83}, which is defined by
\[
f^0(x,v)=\limsup_{y\to x; t\downarrow 0}\, [f(y+tv)-f(y)]/t.
\]
The Clarke-Rockafellar subdifferential of $f$ at $x$ is defined as follows:
\[
\CRsubd f(x)=\{x^*\in\E^*\mid\scalpr{x^*}{v}\le\CR f(x,v),\quad\forall v\in\E\}
\]
with the convention that $\CRsubd f(x)=\emptyset$ if $x\notin{\rm dom}(f)$.
The Clarke's subdifferential (or Clarke's generalized gradient) of $f$ at $x$ is defined as follows:
\[
\subd f(x)=\{x^*\in\E^*\mid\scalpr{x^*}{v}\le f^0(x,v),\quad\forall v\in\E\}.
\]

\end{definition}

\begin{definition}
A real-valued function $\f$ is called pseudoconvex (in terms of the Clarke directional derivative) iff the following implication is satisfied
\be\label{2}
f(y)<f(x)\quad\Rightarrow\quad\scalpr{x^*}{y-x}<0,\quad\forall x^*\in\subd f(x).
\ee
\end{definition}

Recall that a  real function is said to be quasiconvex on a convex set $X$ iff, 
\[
f[x+t(y-x)]\le \max\{f(x),f(y)\},\quad\forall  x\in X\, \;\forall y\in X,\;\forall t\in[0,1].
\]
The following result is due to Daniilidis, Hadjisavvas \cite[Proposition 2.2]{dan99}.

\begin{lemma}\label{lema2}
Let $\f$ be a lower semicontinuous  pseudoconvex function with a convex domain. Then $f$ is quasiconvex.
\end{lemma}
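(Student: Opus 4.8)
\noindent\emph{Proof idea.} The plan is to argue by contradiction, extracting a violation of the pseudoconvexity condition (\ref{2}) from a failure of quasiconvexity by invoking (\ref{2}) twice along a single segment. Suppose $f$ is pseudoconvex but not quasiconvex. Then one can choose $a,b\in X$ and $t_0\in(0,1)$ such that, setting $z_0:=(1-t_0)a+t_0b$ (a point of $X$, since the domain is convex), $f(z_0)>m$ where $m:=\max\{f(a),f(b)\}$. I would also record that, by lower semicontinuity of the map $t\mapsto f((1-t)a+tb)$, the set $I:=\{t\in(0,1):f((1-t)a+tb)>m\}$ is relatively open and nonempty (it contains $t_0$).

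The first main step is to show that the subdifferential is empty at every $r=(1-t)a+tb$ with $t\in I$. Fix such an $r$, and suppose for contradiction that $r^{*}\in\partial f(r)$. Applying (\ref{2}) with $r$ as base point and $a$ as the point of smaller value — legitimate because $f(a)\le m<f(r)$ — yields $\scalpr{r^{*}}{a-r}<0$; symmetrically, $f(b)\le m<f(r)$ yields $\scalpr{r^{*}}{b-r}<0$. But $r$ lies strictly between $a$ and $b$, so $b-r=-\lambda(a-r)$ for some $\lambda>0$, and hence $\scalpr{r^{*}}{b-r}=-\lambda\scalpr{r^{*}}{a-r}>0$, contradicting the previous sentence. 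Thus $\partial f(r)=\emptyset$ for every $t\in I$.

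The second main step is to contradict this emptiness. When $f$ is locally Lipschitz it is immediate: the Clarke subdifferential of a locally Lipschitz function is nonempty at every point of $X$, so the proof is complete in that case. For merely lower semicontinuous $f$ — where (\ref{2}) is to be read with the Clarke--Rockafellar subdifferential $\partial^{\uparrow}f$ in place of $\partial f$ — I would instead invoke a suitable mean value / density property of $\partial^{\uparrow}f$, such as Zagrodny's approximate mean value theorem applied to $f$ along the segment from an endpoint (where $f\le m$) into the region $\{f>m\}$, so as to produce a point on the segment at which $\partial^{\uparrow}f$ is nonempty and delivers the same sign information used in the first step.

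I expect this last point to be the main obstacle. For a merely lower semicontinuous function $\partial^{\uparrow}f$ may vanish on large sets, so the substance of the proof is precisely the guarantee — supplied by the mean value theorem — that at some point where $f$ strictly exceeds the values at both endpoints of a segment there is an actual subgradient available to feed into (\ref{2}). Everything else (the choice of $a,b,t_0$, the openness of $I$, the collinearity identity $b-r=-\lambda(a-r)$) is routine, and once a usable subgradient is in hand the two applications of (\ref{2}) close the argument at once.
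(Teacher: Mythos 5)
This lemma is not proved in the paper at all: it is quoted verbatim from Daniilidis and Hadjisavvas \cite[Proposition 2.2]{dan99}, so there is no in-paper argument to compare against; your proof is essentially the standard one from that reference. Your two-step structure is sound, and the first step is exactly right: if $r=(1-t)a+tb$ with $t\in(0,1)$ satisfies $f(r)>\max\{f(a),f(b)\}$, then $a-r=-t(b-a)$ and $b-r=(1-t)(b-a)$ are antiparallel, so the two applications of (\ref{2}) (with base point $r$ and the smaller-valued points $a$ and $b$) are incompatible, forcing $\partial f(r)=\emptyset$. In the locally Lipschitz setting --- which is the one this paper actually operates in, since it only defines the Clarke subdifferential for locally Lipschitz functions and that subdifferential is nonempty at every point by Hahn--Banach applied to the finite sublinear function $f^0(r,\cdot)$ --- your argument is therefore complete, and in fact you do not even need the lower semicontinuity hypothesis or the set $I$.

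The soft spot is the one you yourself flag: for merely lower semicontinuous $f$ with the Clarke--Rockafellar subdifferential, Zagrodny's approximate mean value theorem does \emph{not} produce a point on the segment at which $\partial^{\uparrow}f$ is nonempty. It produces a point $c$ on the segment together with a sequence $x_n\to c$ (in general off the segment, with $f(x_n)\to f(c)$) and subgradients $x_n^*\in\partial^{\uparrow}f(x_n)$ satisfying $\liminf_n\langle x_n^*,c-x_n\rangle\ge 0$ and a directional lower estimate along the segment. Since the $x_n^*$ need not be bounded, "nearby direction" does not imply "same sign," and one must combine the two liminf estimates to transfer the sign information from $c$ to $x_n$ before invoking (\ref{2}) at $x_n$ rather than at a point of the segment. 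That transfer is precisely the technical content of the cited proposition; as written, your second step does not quite close in the general lower semicontinuous case, though it is fully rigorous in the locally Lipschitz case that the rest of the paper relies on.
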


The following result is a particular case of Lemma 3 in the paper by Ivanov  \cite{jogo-3}:
\begin{lemma}\label{lema1}
Let $f$ be a  lower semicontinuous  proper extended pseudoconvex  real function, defined on some open convex set in a Banach space $\E$, which contains the convex set $S$. Then the following implication holds
\[
x\in S,\; y\in S,\; f(y)\le f(x)\quad\Rightarrow\quad\scalpr{x^*}{y-x}\le 0,\quad\forall x^*\in\CRsubd f(x).
\]
\end{lemma}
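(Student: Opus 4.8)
The plan is to reduce everything to the pseudoconvexity implication~(\ref{2}) and to the quasiconvexity of $f$ furnished by Lemma~\ref{lema2}. First I record the tools I will use for the locally Lipschitz function $f$: for each $z$ the set $\subd f(z)$ is nonempty and weak-$*$ compact in $\Estar$, and $f^0(z,v)=\max\{\scalpr{\zeta}{v}\mid\zeta\in\subd f(z)\}$ for every $v\in\E$; moreover $f$, being locally Lipschitz, is continuous, hence lower semicontinuous, so Lemma~\ref{lema2} applies on the convex domain of $f$ and $f$ is quasiconvex there. Now fix $x,y\in S$ and $x^*\in\subd f(x)$ with $f(y)\le f(x)$. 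Since $\scalpr{x^*}{y-x}\le f^0(x,y-x)$, it suffices either to obtain $\scalpr{x^*}{y-x}\le 0$ directly from~(\ref{2}) or to show $f^0(x,y-x)\le 0$.

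If $f(y)<f(x)$, then (\ref{2}) gives at once $\scalpr{x^*}{y-x}<0$, so from now on $f(y)=f(x)$. Suppose first that $f$ takes some value strictly below $f(x)$, say $f(z_0)<f(x)=f(y)$ with $z_0$ in the domain of $f$. Applying~(\ref{2}) at the point $y$ (with $z_0$ playing the role of the second variable) yields $\scalpr{y^*}{z_0-y}<0$ for every $y^*\in\subd f(y)$, hence $f^0(y,z_0-y)<0$; since $\limsup_{t\downarrow 0}[f(y+t(z_0-y))-f(y)]/t\le f^0(y,z_0-y)$, we get $f(y+t(z_0-y))<f(y)=f(x)$ for all small $t>0$. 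Choosing $t_j\downarrow 0$ so small that the points $y_j:=y+t_j(z_0-y)$ (which lie in the domain and converge to $y$) satisfy $f(y_j)<f(x)$, implication~(\ref{2}) gives $\scalpr{x^*}{y_j-x}<0$ for every $j$, and letting $j\to\infty$ yields $\scalpr{x^*}{y-x}\le 0$.

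It remains to treat the case in which no such $z_0$ exists, i.e.\ $f(z)\ge f(x)$ for every $z$ in the domain; then $x$ and $y$ are both minimizers of $f$. Here I would argue directly from $f^0(x,y-x)=\limsup_{z\to x,\,t\downarrow 0}[f(z+t(y-x))-f(z)]/t$: let $z_k\to x$ and $t_k\downarrow 0$ be arbitrary. For $k$ large the segment $[z_k,y]$ lies in the domain, so quasiconvexity gives $f\le\max\{f(z_k),f(y)\}=f(z_k)$ on it (using $f(y)=f(x)\le f(z_k)$), hence $f(p_k)\le f(z_k)$ with $p_k:=(1-t_k)z_k+t_ky$. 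Because $z_k+t_k(y-x)=p_k+t_k(z_k-x)$ and both $p_k$ and $z_k+t_k(y-x)$ tend to $x$, for $k$ large they lie in a neighbourhood on which $f$ is Lipschitz with some constant $L$, so $f(z_k+t_k(y-x))\le f(p_k)+Lt_k\norm{z_k-x}\le f(z_k)+Lt_k\norm{z_k-x}$; thus $[f(z_k+t_k(y-x))-f(z_k)]/t_k\le L\norm{z_k-x}\to 0$. Therefore $f^0(x,y-x)\le 0$ and $\scalpr{x^*}{y-x}\le 0$.

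The first paragraph is routine (standard facts about the Clarke subdifferential and the description of $f^0(z,\cdot)$ as the support function of $\subd f(z)$), and the two cases of the second paragraph are elementary limiting arguments. I expect the main obstacle to be the last case: there no point of strictly smaller value is available, so (\ref{2}) cannot be applied near $x$ at all, and one has to exploit instead that the whole segment $[x,y]$ consists of minimizers of $f$ and couple the quasiconvexity inequality along $[z_k,y]$ with a local Lipschitz estimate at the auxiliary point $p_k$.
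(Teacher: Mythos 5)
Your argument is correct, but there is nothing in the paper to compare it with: the paper does not prove Lemma~\ref{lema1} at all, it simply quotes it as a particular case of Lemma~3 of \cite{jogo-3}. Your proof is therefore a legitimate self-contained substitute, and it holds up under scrutiny. The easy half ($f(y)<f(x)$) is immediate from~(\ref{2}); the substance is the case $f(y)=f(x)$, which you split according to whether $f$ attains a value strictly below $f(x)$ somewhere in its domain. When such a $z_0$ exists, you perturb $y$ along $[y,z_0]$: the implication~(\ref{2}) applied at $y$ gives $\scalpr{y^*}{z_0-y}<0$ for all $y^*\in\subd f(y)$, and the weak-$*$ compactness of $\subd f(y)$ together with the support-function representation of $f^0(y,\cdot)$ (both standard for locally Lipschitz functions, see \cite{cla83}) upgrades this to $f^0(y,z_0-y)<0$, hence to points $y_j\to y$ with $f(y_j)<f(x)$, after which~(\ref{2}) at $x$ passes to the limit. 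When no such $z_0$ exists, $x$ and $y$ are global minimizers and you bound $f^0(x,y-x)\le 0$ directly: quasiconvexity (via Lemma~\ref{lema2}, applicable since locally Lipschitz functions are lower semicontinuous) gives $f(p_k)\le f(z_k)$ with $p_k=(1-t_k)z_k+t_ky$, the identity $z_k+t_k(y-x)=p_k+t_k(z_k-x)$ checks out, and the local Lipschitz estimate $|f(z_k+t_k(y-x))-f(p_k)|\le Lt_k\norm{z_k-x}$ forces the difference quotients to have nonpositive limit superior along every admissible sequence, which by the sequential characterization of $\limsup$ yields $f^0(x,y-x)\le 0$ and hence $\scalpr{x^*}{y-x}\le 0$. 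The only prerequisites you import beyond the paper's own statements are the standard facts about the Clarke subdifferential listed in your first paragraph, all of which are available in \cite{cla83}, so the proof is complete.
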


The next result were also derived by Ivanov \cite{jogo-3}:

\begin{lemma}\label{th2} 
Let $\f$ be a lower semicontinuous and radially continuous proper extended real-valued function with a convex domain.
Then $f$ is pseudoconvex if and only if there exists a positive function $p:\E\times\E\times\Estar\to (0,+\infty)$ with

\be\label{3}
\begin{array}{c}
p(x,y,x^*)\;\scalpr{x^*}{y-x}+p(y,x,y^*)\; \scalpr{y^*}{x-y}\le 0, \\
\forall (x,y)\in X \times X,\;\forall (x^*,y^*)\in\subd f(x)\times\subd f(y).
\end{array}
\ee
\end{lemma}

\section{Characterizations of pseudolinear functions}
\label{s7}
In this section, we apply the characterizations of pseudoconvex functions to obtain characterizations of pseudolinear ones.

Recall that a function $f$ is said to be pseudoconcave iff $-f$ is pseudoconvex. A function $f$ is said to be pseudolinear iff $f$ is both pseudoconvex and pseudoconcave. In the characterizations of pseudoconvex functions, we suppose that $f$ is a proper extended real-valued function, which implies that $f(x)>-\infty$ for every $x\in\E$. We want to apply these results to functions such that both $f$ and $-f$ are proper. Therefore $f$ should be a finite function.

The next theorem is well known. Its proof appears in terms of various derivatives and subdifferential in \cite{ans99,che84,lu05,rez12}.  It was proved initially in  \cite{che84}. Our proof is more general, but  similar. We prove it, because the construction in it is used in the  proof of 
 another result.

\begin{theorem}\label{pl-th1} 
Let $f$ be a  lower semicontinuous  proper extended function with a convex domain $S$.  Then 
$f$ is pseudolinear on $S$ with respect to the Clarke-Rockafellar  directional derivative if and only if
there exists a positive function $p:S\times S\times\E^*\to (0,+\infty)$ with
\be\label{pl2}
f(y)-f(x)=p(x,y,x^*)\,\scalpr{x^*}{y-x},\quad\forall\; x\in S\,\;\forall y\in S,\;\forall x^*\in\CRsubd f(x)
\ee
such that $\CRsubd f(x)\ne\emptyset$.
\end{theorem}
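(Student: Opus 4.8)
The plan is to derive both implications from the pseudoconvexity characterization in Theorem~\ref{th2}, applied in turn to $f$ and to $-f$, together with the sign information supplied by Lemma~\ref{lema1}. For the ``only if'' direction, suppose $f$ is pseudolinear on $S$ with $\CRsubd f(x)\neq\emptyset$ for all $x\in S$. First I would fix $x,y\in S$ and $x^*\in\CRsubd f(x)$ and analyze the sign of $\scalpr{x^*}{y-x}$ according to whether $f(y)<f(x)$, $f(y)=f(x)$, or $f(y)>f(x)$. If $f(y)<f(x)$, pseudoconvexity gives $\scalpr{x^*}{y-x}<0$; if $f(y)>f(x)$, pseudoconcavity (pseudoconvexity of $-f$, noting $\CRsubd(-f)(x)=-\CRsubd f(x)$) gives $\scalpr{x^*}{y-x}>0$; and if $f(y)=f(x)$, applying Lemma~\ref{lema1} to both $f$ and $-f$ forces $\scalpr{x^*}{y-x}=0$. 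In every case the sign of $f(y)-f(x)$ matches the sign of $\scalpr{x^*}{y-x}$, and in particular one vanishes iff the other does. I would then \emph{define}
\[
p(x,y,x^*)=\frac{f(y)-f(x)}{\scalpr{x^*}{y-x}}\quad\text{when }\scalpr{x^*}{y-x}\neq 0,
\qquad p(x,y,x^*)=1\quad\text{otherwise,}
\]
which by the case analysis is a well-defined function into $(0,+\infty)$ and satisfies \eqref{pl2} identically; the only delicate point is checking that when $\scalpr{x^*}{y-x}=0$ we indeed have $f(y)-f(x)=0$, so that \eqref{pl2} holds with the arbitrary positive value $1$, which is exactly the content of the equality case above.

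For the ``if'' direction, assume such a positive $p$ exists with \eqref{pl2} holding and $\CRsubd f(x)\neq\emptyset$ everywhere. I would show $f$ is pseudoconvex directly from the definition: if $f(y)<f(x)$ then for any $x^*\in\CRsubd f(x)$, since the Clarke subdifferential is contained in the Clarke--Rockafellar subdifferential (for locally Lipschitz $f$ they coincide), \eqref{pl2} gives $p(x,y,x^*)\scalpr{x^*}{y-x}=f(y)-f(x)<0$, and positivity of $p$ yields $\scalpr{x^*}{y-x}<0$. Replacing $f$ by $-f$ and $p(x,y,x^*)$ by $p(x,y,-x^*)$ in \eqref{pl2} shows $-f$ is pseudoconvex by the same argument, hence $f$ is pseudolinear. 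This direction is essentially a one-line sign manipulation once the subdifferential inclusion is invoked.

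The main obstacle I anticipate is the equality case in the forward direction: establishing that $f(y)=f(x)$ together with $x^*\in\CRsubd f(x)$ forces $\scalpr{x^*}{y-x}=0$. This is where Lemma~\ref{lema1} is essential, but it is stated for the Clarke subdifferential of a locally Lipschitz pseudoconvex function, so I must first note that for locally Lipschitz $f$ the Clarke--Rockafellar and Clarke subdifferentials agree, and that $f$ restricted to a neighborhood inside $S$ is locally Lipschitz and pseudoconvex, so the lemma applies with $S$ itself as the convex set; applying it to $f$ gives $\scalpr{x^*}{y-x}\le 0$ and applying it to $-f$ (whose subdifferential at $x$ is $-\CRsubd f(x)$) gives $\scalpr{x^*}{y-x}\ge 0$. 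A secondary technical point is ensuring the domain hypotheses of Theorem~\ref{th2} and Lemma~\ref{lema1} are met --- convexity of $S$ and local Lipschitzness, together with radial and lower semicontinuity, all of which follow from $f$ being locally Lipschitz on an open convex set containing $S$ --- but these are routine once stated. I would close by remarking that the constructed $p$ is not unique, since its value on the set where $\scalpr{x^*}{y-x}=0$ is arbitrary.
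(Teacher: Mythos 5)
Your proposal is correct and follows essentially the same route as the paper: you define the same function $p$ by the quotient $[f(y)-f(x)]/\scalpr{x^*}{y-x}$ (with value $1$ on the degenerate set), use pseudoconvexity of $f$ and of $-f$ together with Lemma~\ref{lema1} to show the signs of $f(y)-f(x)$ and $\scalpr{x^*}{y-x}$ agree (the paper organizes the cases by the sign of $\scalpr{x^*}{y-x}$ rather than of $f(y)-f(x)$, which is logically equivalent), and the converse is the same one-line sign argument. Your explicit remarks on the coincidence of the Clarke and Clarke--Rockafellar subdifferentials for locally Lipschitz functions address a point the paper leaves implicit.
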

\begin{proof}
Let $f$ be pseudolinear. We prove that there exists a function $p$ satisfying (\ref{pl2}). 
Consider the function, defined as follows: 
\be\label{plp}
p(x,y,x^*)=\left \{
\begin{array}{ll}
\frac{f(y)-f(x)}{\scalpr{x^*}{y-x}}, & \quad\textrm{if}\quad\scalpr{x^*}{y-x}\ne 0 \\
1, &\quad\textrm{if}\quad\scalpr{x^*}{y-x}=0.
\end{array}\right. \ee
We prove that it is positive. Let $\scalpr{x^*}{y-x}>0$. It follows from pseudoconvexity that $f(y)\ge f(x)$. Suppose that it is possible that $f(y)=f(x)$. Then by Lemma \ref{lema1} we obtain that $\scalpr{x^*}{y-x}\le 0$, which is a contradiction. Let $\scalpr{x^*}{y-x}<0$. Since $\partial(sf)(x)=s\partial f(x)$ for every $s\in\R$, then $-x^*\in \partial (-f)(x)$. We conclude from here that $f(y)\le f(x)$. By Lemma \ref{lema1} we obtain that the case $f(y)=f(x)$ is impossible. Therefore $p>0$.

We prove that the function $p$ satisfies (\ref{pl2}). It is enough to show that $\scalpr{x^*}{y-x}=0$ implies that $f(y)=f(x)$. Indeed, assume the contrary. If $f(y)<f(x)$, then by pseudoconvexity we obtain that $\scalpr{x^*}{y-x}<0$, a contradiction. 
If $f(y)>f(x)$, then by pseudoconcavity we again get a contradiction.

Let  $x\in S$,   $y\in S$,  $x^*\in\CRsubd f(x)$ and equation (\ref{pl2}) is satisfied. Obviously (\ref{pl2}) implies that $f$ is pseudolinear.
\end{proof}

\begin{theorem}\label{pl-th2} 
Let $f$ be a locally Lipschitz real-valued function, defined on some open convex set in a Banach spase $\E$, which contains the convex set $S$.
Then $f$ is pseudolinear on $S$ if and only if there exists a positive function $p: S\times S\times\Estar\to (0,+\infty)$ with
\be\label{pl3}
\begin{array}{c}
p(x,y,x^*)\;\scalpr{x^*}{y-x}+p(y,x,y^*)\; \scalpr{y^*}{x-y}=0, \\
\forall (x,y)\in S \times S,\;\forall (x^*,y^*)\in\subd f(x)\times\subd f(y).
\end{array}
\ee 
\end{theorem}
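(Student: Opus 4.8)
The plan is to prove Theorem \ref{pl-th2} by reducing it to the conjunction of two applications of Theorem \ref{th2}, one for $f$ (pseudoconvexity) and one for $-f$ (pseudoconcavity), and then assembling the two positive functions into a single one that makes the inequalities collapse to the equality in (\ref{pl3}).

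First, for the forward implication, assume $f$ is pseudolinear on $S$. Since $f$ is locally Lipschitz and real-valued, it is in particular lower semicontinuous, radially continuous and proper on a convex domain containing $S$, so Theorem \ref{th2} applies both to $f$ and to $-f$. Applying it to $f$ yields a positive $p_1$ with $p_1(x,y,x^*)\scalpr{x^*}{y-x}+p_1(y,x,y^*)\scalpr{y^*}{x-y}\le 0$, and applying it to $-f$ (noting $\subd(-f)(x)=-\subd f(x)$) yields a positive $p_2$ with the reversed inequality $p_2(x,y,x^*)\scalpr{x^*}{y-x}+p_2(y,x,y^*)\scalpr{y^*}{x-y}\ge 0$ for all $(x,y)\in S\times S$ and all $(x^*,y^*)\in\subd f(x)\times\subd f(y)$. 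I would then define $p$ as a positive combination of $p_1$ and $p_2$ chosen so that the two opposing inequalities force equality; the cleanest route is actually to construct $p$ directly, mimicking (\ref{plp}) from the previous proof: set $p(x,y,x^*)=[f(y)-f(x)]/\scalpr{x^*}{y-x}$ when the denominator is nonzero and $p(x,y,x^*)=1$ otherwise, and verify (using Lemma \ref{lema1} applied to $f$ and to $-f$, exactly as in the proof of Theorem \ref{pl-th1}) that this $p$ is strictly positive and that $\scalpr{x^*}{y-x}=0$ forces $f(x)=f(y)$. With this $p$, for any $(x,y)$ and $(x^*,y^*)$ one has $p(x,y,x^*)\scalpr{x^*}{y-x}=f(y)-f(x)$ and $p(y,x,y^*)\scalpr{y^*}{x-y}=f(x)-f(y)$, whose sum is identically $0$, giving (\ref{pl3}).

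For the converse, suppose a positive $p$ satisfying (\ref{pl3}) exists. Then the left-hand side is $\le 0$, so Theorem \ref{th2} (the sufficiency direction, with the same ambient hypotheses as above) gives that $f$ is pseudoconvex. Likewise the left-hand side is $\ge 0$; rewriting with $-x^*\in\subd(-f)(x)$, $-y^*\in\subd(-f)(y)$ and the same $p$ (or $p$ with signs absorbed), the inequality $\le 0$ needed for $-f$ holds, so Theorem \ref{th2} gives that $-f$ is pseudoconvex, i.e. $f$ is pseudoconcave. Hence $f$ is pseudolinear on $S$.

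The main obstacle I anticipate is bookkeeping around the definition of $p$ in the forward direction: one must make sure the single function $p:S\times S\times\Estar\to(0,+\infty)$ simultaneously serves the $f$-inequality and the $(-f)$-inequality, and that positivity genuinely holds in the degenerate case $\scalpr{x^*}{y-x}=0$ — this is where Lemma \ref{lema1}, applied once to $f$ and once to $-f$, is essential, and one should check that the convex set $S$ is contained in the open convex domain on which $f$ is locally Lipschitz so that the lemma is available. A secondary subtlety is verifying that $-f$ inherits all the regularity hypotheses of Theorem \ref{th2} (lower semicontinuity and radial continuity), which is immediate here since $f$ is locally Lipschitz and hence continuous, but should be stated. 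Once $p$ is pinned down by the explicit formula, the algebra making (\ref{pl3}) an identity is trivial, so I expect the proof to be short.
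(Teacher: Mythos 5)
Your proposal is correct and follows essentially the same route as the paper: the forward direction uses the explicit quotient construction of $p$ from Theorem \ref{pl-th1} (the paper simply cites that theorem and adds the two resulting identities, whereas you inline its proof), and the converse applies Theorem \ref{th2} to both $f$ and $-f$, reading the equality in (\ref{pl3}) as the two inequalities $\le 0$ and $\ge 0$. The sign bookkeeping you note for $\subd(-f)(x)=-\subd f(x)$ is exactly the point the paper leaves implicit, so no gap remains.
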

\begin{proof}
Let $f$ be pseudolinear. We prove that inequality (\ref{pl3}) holds. Choose arbitrary $x \in S$, $y\in S$. 
It follows from Theorem \ref{pl-th1} that
there exists a function $p: S\times S\times\E^*\to (0,+\infty)$ with
\be\label{pl4}
f(y)-f(x)=p(x,y,x^*)\;\scalpr{x^*}{y-x},\quad\forall x^*\in\subd f(x) 
\ee
and
\be\label{pl5}
f(x)-f(y)=p(y,x,y^*)\;\scalpr{y^*}{x-y},\quad\forall y^*\in\subd f(y).
\ee
If we add  (\ref{pl4}) and (\ref{pl5}), then we obtain (\ref{pl3}).

The converse claim follows from Lemma \ref{th2}.
\end{proof}

Equation (\ref{pl3})  is a type of monotonicity of the subdifferential of a pseudolinear function. More properties of such maps are studied in \cite{bia03}. More types monotone maps have been defined in \cite{kar76,kar90}.

\begin{theorem}\label{pl-th3} Let $S$ be a convex set, included in some open convex set $\Gamma$ in a Banach space $\E$.
Suppose that $f$ is a locally Lipschitz function, which is  pseudolinear on $S$ with respect to the Clarke's derivative.  Then 
for all $x\in S$, $y\in S$, and $\lambda\in[0,1]$ there exists a number $b>0$, which depend on $x$, $y$, $\lambda$
such that  the following conditions are satisfied:
\be\label{pl6}
f[x+\lambda(y-x)]=\lambda\, b\, f(y)+(1-\lambda\, b) f(x),
\ee
\be\label{pl7}
0<b\le 1/\lambda ,\quad\forall\lambda\in(0,1].
\ee
\end{theorem}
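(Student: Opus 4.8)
The plan is to fix $x,y\in S$ and $\lambda\in[0,1]$ and to extract $b$ by looking at the single point $z:=x+\lambda(y-x)$, which lies in $S$ because $S$ is convex. When $\lambda=0$ we have $z=x$, so (\ref{pl6}) reduces to $f(x)=f(x)$ and holds with, say, $b=1$, while (\ref{pl7}) is vacuous; hence the only real case is $\lambda\in(0,1]$. In that case I would choose $x^{*}\in\subd f(x)$, which is nonempty and equals $\CRsubd f(x)$ since $f$ is locally Lipschitz, and apply Theorem \ref{pl-th1} to the pairs $(x,z)$ and $(x,y)$. Because $z-x=\lambda(y-x)$, this produces, for the positive function $p$ supplied by that theorem, the two identities $f(z)-f(x)=\lambda\,p(x,z,x^{*})\,\scalpr{x^{*}}{y-x}$ and $f(y)-f(x)=p(x,y,x^{*})\,\scalpr{x^{*}}{y-x}$.

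The next step is to split on whether $f(x)=f(y)$. If $f(x)=f(y)$, the second identity forces $\scalpr{x^{*}}{y-x}=0$ (since $p>0$), so the first gives $f(z)=f(x)$; then (\ref{pl6}) holds with $b=1$, and $0<1\le 1/\lambda$ because $\lambda\le1$. If $f(x)\ne f(y)$, then $\scalpr{x^{*}}{y-x}\ne0$, and putting $b:=p(x,z,x^{*})/p(x,y,x^{*})>0$ and dividing the two identities yields exactly $f(z)-f(x)=\lambda b\,(f(y)-f(x))$, which rearranges to (\ref{pl6}). Note that (\ref{pl6}) then pins down $b=(f(z)-f(x))/(\lambda(f(y)-f(x)))$, so the chosen $b$ indeed depends only on $x$, $y$, $\lambda$ (and $f$), as the statement requires.

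The substantive point is the bound $\lambda b\le1$ of (\ref{pl7}); here $b>0$ is automatic from the construction, but the single-endpoint computation gives no control on the size of $b$. For $\lambda=1$ there is nothing to do: $z=y$, so $p(x,z,x^{*})=p(x,y,x^{*})$ and $b=1=1/\lambda$. For $\lambda\in(0,1)$ I would repeat the computation from the endpoint $y$: taking $y^{*}\in\subd f(y)$ and using $z-y=(1-\lambda)(x-y)$, Theorem \ref{pl-th1} applied to $(y,z)$ and $(y,x)$ gives $f(z)=f(y)+(1-\lambda)\,c\,(f(x)-f(y))$ with $c:=p(y,z,y^{*})/p(y,x,y^{*})>0$ (here $\scalpr{y^{*}}{x-y}\ne0$ because $f(x)\ne f(y)$). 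Equating this expression for $f(z)$ with the one from (\ref{pl6}) and cancelling the nonzero factor $f(y)-f(x)$ gives the clean identity $\lambda b+(1-\lambda)c=1$; since $(1-\lambda)c>0$, this forces $\lambda b<1$, hence $0<b<1/\lambda$, and (\ref{pl7}) follows. I expect the only delicate part to be keeping straight the degenerate configurations $\lambda\in\{0,1\}$ and $f(x)=f(y)$, where the ``positive function'' identities collapse to $0=0$; the main obstacle, the estimate (\ref{pl7}), is cleared precisely by applying Theorem \ref{pl-th1} from \emph{both} endpoints of the segment rather than just one. (Alternatively, $\lambda b\le1$ can be read off from the fact that a continuous pseudolinear function is both quasiconvex and quasiconcave on $S$, so $f(z)$ lies between $f(x)$ and $f(y)$, but the two-sided computation is shorter and avoids invoking quasiconvexity.)
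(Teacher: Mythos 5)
Your argument is correct, and it reaches the paper's conclusion by a genuinely different decomposition. The paper anchors at the \emph{intermediate} point: it takes a single subgradient $\xi\in\subd f(z(\lambda))$ at $z(\lambda)=x+\lambda(y-x)$, applies Theorem \ref{pl-th1} from $z(\lambda)$ toward each endpoint to get (\ref{pl8})--(\ref{pl9}), and forms the combination $(1-\lambda)\cdot(\ref{pl8})+\lambda\cdot(\ref{pl9})$ to cancel $\scalpr{\xi}{y-x}$; this yields (\ref{pl6}) with the explicit formula (\ref{pl11}), from which $0<\lambda b<1$ is immediate because all the $q$-values are positive. You instead anchor at the two endpoints, expanding $f(z)$ once from $x$ and once from $y$, and recover the bound from the identity $\lambda b+(1-\lambda)c=1$. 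The trade-offs: your version handles the degenerate case $f(x)=f(y)$ directly (the endpoint identity forces $\scalpr{x^*}{y-x}=0$ and hence $f(z)=f(x)$), whereas the paper must invoke Lemma \ref{lema2} to get quasiconvexity and quasiconcavity for that case; on the other hand, the paper's midpoint computation needs only one subgradient and produces the positivity bound $0<\lambda b<1$ without a second expansion. Both proofs must, and do, observe that $b$ is independent of the chosen subgradient because (\ref{pl6}) pins it down as $b=(f(z)-f(x))/(\lambda(f(y)-f(x)))$ when $f(x)\ne f(y)$. Your appeal to nonemptiness of $\subd f(x)$ and its agreement with $\CRsubd f(x)$ for locally Lipschitz $f$ is standard and matches the paper's implicit use of the same facts.
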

\begin{proof}
Choose arbitrary points  $x\in S$, $y\in S$
and a number $\lambda\in (0,1)$.  Denote $z(\lambda)=x+\lambda(y-x)$. We have $\subd f(z(\lambda))\ne\emptyset$. Take arbitrary $\xi\in\subd f(z(\lambda))$.
It follows from Theorem \ref{pl-th1} that there exists a positive function $q: S\times S\times\E^*\to (0,+\infty)$ such that
\be\label{pl8}
q(z(\lambda),x,\xi)[f(x)-f(z(\lambda))]=\scalpr{\xi}{x-z(\lambda)}=\lambda\scalpr{\xi}{x-y}
\ee
and
\be\label{pl9}
q(z(\lambda),y,\xi)[f(y)-f(z(\lambda))]=\scalpr{\xi}{y-z(\lambda)}=(1-\lambda)\scalpr{\xi}{y-x}
\ee
where $q=1/p$. Let us multiply (\ref{pl8}) by $(1-\lambda)$,
(\ref{pl9}) by $\lambda$, and add the obtained inequalities. Then we obtain that (\ref{pl6}) holds where
\be\label{pl11}
b=q(z(\lambda),y,\xi)/[\lambda\, q(z(\lambda),y,\xi)+(1-\lambda)\, q(z(\lambda),x,\xi)].
\ee
It follows from (\ref{pl11}) that $0<\lambda\, b<1$ if $0<\lambda<1$ and
$x\ne y$.

We prove that $b$ does not depend on $\xi$. Equation (\ref{pl6}) implies that if $f(y)\ne f(x)$, then
\[
b=\frac{f[x+\lambda(y-x)]-f(x)}{\lambda [f(y)-f(x)]}.
\]
Suppose that $f(y)=f(x)$. Since the function $f$ is both pseudoconvex and pseudoconcave, then by Lemma \ref{lema2}, $f$
is both quasiconvex and quasiconcave. Therefore, 
\[
f[x+\lambda(y-x)]\le\max\{f(x),f(y)\}=f(x)\quad {\rm for all}\; \lambda\in [0,1],
\]
\[
f[x+\lambda(y-x)]\ge\min\{f(x),f(y)\}=f(x)\quad {\rm for all}\; \lambda\in [0,1].
\]
We conclude from here that $f[x+\lambda(y-x)]=f(x)$ for all $\lambda\in[0,1]$.
Hence, (\ref{pl6}) is satisfied with $b=1$ for every $\lambda\in[0,1]$. It is seen that really $b$ does not depend on $\xi$.
\end{proof}

The next theorem gives us a derivative-free complete characterization of
pseudolinear functions.

\begin{theorem}\label{pl-th4}
Let $S$ be a convex set in a Banach space $\E$. Suppose that  $f$ is a continuously
differentiable function, defined on some open convex set, which contains $S$. Then the following claims are equivalent:

{\rm (a)} $f$ is pseudolinear on $S$;

{\rm (b)} there is a function $b: S\times S\times [0,1]\to (0,+\infty)$ such that for all $x\in S$, $y\in S$
there exists the limit
\be \label{19}
q(x,y)=\lim_{\lambda\downarrow 0}b(x,y,\lambda),
\ee
 $q(x,y)$ is strictly positive, and
for each $\lambda\in[0,1]$  equation {\rm (\ref{pl6})} and inequality {\rm (\ref{pl7})} are satisfied.
\end{theorem}

\begin{proof}
We prove the implication (a) $\Rightarrow$ (b). Let $f$ be pseudolinear on $S$.
It follows from Theorem \ref{pl-th3} that the function defined by (\ref{pl11}) satisfy (\ref{pl6}) and (\ref{pl7}). We prove that there exists the limit $\lim_{\lambda\downarrow 0}b(x,y,\lambda)$, and it is strictly positive.
Take arbitrary points $x$, $y\in S$.  We prove that $\lim_{\lambda\downarrow 0}q(z(\lambda),x)=1$, where $q=1/p$ and the function $p$ is defined by (\ref{plp}). It follows from the explicit construction of the function $p$ in the proof of Theorem \ref{pl-th1} that
\[
q(z(\lambda),x)=\lambda\scalpr{\nabla f(z(\lambda))}{x-y}/[f(x)-f(z(\lambda))]
\]
if $f(x)\ne f(z(\lambda))$, because  $f(x)\ne f(z(\lambda))$ if and only if $\nabla f(z(\lambda))(x-z(\lambda))\ne 0$. If  $f(x)=f(z(\lambda))$, then $q(z(\lambda),x)=1$. On the other hand we have
\[
\lim_{\lambda\downarrow 0}\frac{f(z(\lambda))-f(x)}{\lambda}=
\lim_{\lambda\downarrow 0}\frac{f[x+\lambda(y-x)]-f(x)}{\lambda}=
\scalpr{\nabla f(x)}{y-x}.
\]
Therefore, using that $f$ is continuously differentiable, we obtain that
\[
\lim_{\lambda\downarrow 0}\frac{\lambda\scalpr{\nabla f(z(\lambda))}{x-y}}{f(x)-f(z(\lambda))}=
\lim_{\lambda\downarrow 0}\frac{\scalpr{\nabla f(z(\lambda))}{x-y}}{\scalpr{\nabla f(x)}{x-y}}=1.
\]
We conclude from here that
\be\label{pl-a1}
\lim_{\lambda\downarrow 0}\, q(z(\lambda),x)=1.
\ee

To prove that 
\be\label{pl-bq}
\lim_{\lambda\downarrow 0}b(x,y,\lambda)=q(x,y)>0
\ee
we consider two cases:

First, $f(y)\ne f(x)$. Then $f(y)\ne f(z(\lambda))$ for all sufficiently small $\lambda>0$. It follows from $q=1/p$ and (\ref{plp}) that
\be\label{pl-b1}
q(z(\lambda),y)=(1-\lambda)\scalpr{\nabla f(z(\lambda))}{y-x}/[f(y)-f(z(\lambda))].
\ee
According to the continuous differentiability we obtain that
\be\label{pl-b2}
\lim_{\lambda\downarrow 0}\, q(z(\lambda),y)=\frac{\nabla f(x)(y-x)}{f(y)-f(x)}=q(x,y).
\ee
Then we conclude from (\ref{pl11}),  (\ref{pl-a1}), (\ref{pl-b2})  that  (\ref{pl-bq}) holds.

Second, $f(y)=f(x)$. We have that $\nabla f(x)(y-x)=0$. Therefore $q(x,y)=1$. It this case $f[x+\lambda(y-x)]=f(x)$ for all $\lambda\in[0,1]$, and
$b(x,y,\lambda)=1$ for every $\lambda\in[0,1]$. Therefore, the required equality (\ref{pl-bq}) is satisfied.
It is seen from (\ref{plp}) that for all $x$ and $y$ such that $f(y)\ne f(x)$ we have $q(x,y)=\scalpr{\nabla f(x)}{y-x}/[f(y)-f(x)]>0$.

The converse claim (b) $\Rightarrow$ (a) is trivial.
\end{proof}

\begin{example}
Consider the function $f:\R^2\to\R$ defined by $f(x)=x_2/x_1$, where 
$$S=\{x=(x_1,x_2)\mid x_1>0\}.$$
 The function $f$ is pseudolinear over $S$. This function satisfies (\ref{pl6}) and (\ref{pl7}). We have 
\[
 (f(x+\lambda(y-x))-f(x))/\lambda=(x_1 y_2-x_2 y_1)/(x_1(x_1+\lambda(y_1-x_1)))
 \]
and
\[
b(x,y,\lambda)=(f(x+\lambda(y-x))-f(x))/(\lambda(f(y)-f(x)))=y_1/(x_1+\lambda(y_1-x_1)).
\]
Therefore $0<\lambda\,b\le 1$ for all $x\in S$, $y\in S$,  $\lambda\in(0,1]$.
\end{example}

\section{On semi\-strict\-ly quasi\-linear and pseudolinear functions} 

It is interesting which is the class of functions such that the necessary conditions from Theorem \ref{pl-th3} become both necessary and sufficient. In this section, we show that this property 
can be generalized and it becomes necessary and sufficient when the function is semistrictly quasilinear.

\begin{definition}
A function $f$, defined on a convex set $S$ is called semistrictly
quasiconvex iff for all $x$, $y\in S$, $\lambda\in (0,1)$ the following
implication holds:
\[
f(y)<f(x)\quad\Rightarrow\quad f[x+\lambda(y-x)]<f(x).
\]
If the function $-f$ is semistrictly quasiconvex, then $f$ is called semistrictly quasiconcave.
\end{definition}

\begin{definition}
A function $f$, defined on a convex set, is called semistrictly quasilinear iff it is both semistrictly quasiconvex and semistrictly quasiconcave.
\end{definition}


\begin{proposition}[\cite{kar67}]\label{pr-kar}
Let $\f$ be a radially lower semicontinuous semistrictly quasiconvex function. Then $f$ is quasiconvex.
\end{proposition}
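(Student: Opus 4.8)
The plan is to argue by contradiction and to isolate the only genuinely nontrivial case, namely when the two endpoints of a segment carry the same value of $f$.

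First I would fix $x,y$ in the (convex) domain and $\lambda\in(0,1)$ — the endpoints $\lambda\in\{0,1\}$ being trivial — and set $z_\lambda=x+\lambda(y-x)$, $g(\lambda)=f(z_\lambda)$. By radial lower semicontinuity, $g$ is a lower semicontinuous function on $[0,1]$. The goal is $g(\lambda)\le\max\{f(x),f(y)\}$. If $f(x)\ne f(y)$, say $f(x)<f(y)$, then semistrict quasiconvexity applied with base point $y$ gives $f\big(y+t(x-y)\big)<f(y)$ for every $t\in(0,1)$, i.e.\ $g(\mu)<f(y)=\max\{f(x),f(y)\}$ for all $\mu\in(0,1)$, and we are done. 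Hence it remains to treat the case $f(x)=f(y)=:\alpha$, where I must show $g(\mu)\le\alpha$ for all $\mu\in[0,1]$.

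Suppose this fails, so $U:=\{\mu\in[0,1]:g(\mu)>\alpha\}$ is nonempty. Since $g$ is lower semicontinuous, $U=g^{-1}\big((\alpha,+\infty)\big)$ is relatively open in $[0,1]$, and $0,1\notin U$ because $g(0)=g(1)=\alpha$, so $U\subseteq(0,1)$. Pick $\lambda_0\in U$ and let $(a,b)\subseteq(0,1)$ be the connected component of $U$ containing $\lambda_0$. Then $g>\alpha$ on $(a,b)$, while $g(a)\le\alpha$ and $g(b)\le\alpha$ (an endpoint of a component of the open set $U$ cannot itself lie in $U$, and the extreme values $0,1$ give $g=\alpha$ in any case). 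Now fix an arbitrary $\mu\in(a,b)$. Since $f(z_a)\le\alpha<f(z_\mu)$, semistrict quasiconvexity with base point $z_\mu$ and other point $z_a$ yields $f\big((1-t)z_\mu+tz_a\big)<f(z_\mu)$ for all $t\in(0,1)$; because $(1-t)z_\mu+tz_a=z_{(1-t)\mu+ta}$ and the parameter $(1-t)\mu+ta$ sweeps out the interval $(a,\mu)$, this says $g(s)<g(\mu)$ for every $s\in(a,\mu)$. The symmetric argument using $z_b$ gives $g(s)<g(\mu)$ for every $s\in(\mu,b)$. Together with $g(a),g(b)\le\alpha<g(\mu)$ this shows $g(\mu)>g(s)$ for \emph{every} $s\in[a,b]\setminus\{\mu\}$, i.e.\ $\mu$ is a strict maximizer of $g$ on $[a,b]$. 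But $(a,b)$ is a nondegenerate interval, so it contains distinct points $\mu_1\ne\mu_2$; applying the previous statement to $\mu_1$ (with $s=\mu_2$) and to $\mu_2$ (with $s=\mu_1$) gives $g(\mu_1)>g(\mu_2)$ and $g(\mu_2)>g(\mu_1)$, a contradiction. Hence $U=\emptyset$, and $f$ is quasiconvex.

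I expect the main obstacle to be precisely the equal-value case: near a ``bump'' of $g$ semistrict quasiconvexity is vacuous, so no contradiction can come from examining a single point in isolation. The decisive idea is to use lower semicontinuity to extract a maximal subinterval of $[0,1]$ on which $g$ stays strictly above $\alpha$, and then to observe that semistrict quasiconvexity forces \emph{every} interior point of that subinterval to be the strict maximum of $g$ over it — which two distinct interior points plainly cannot both be. One should also check carefully that the endpoints of the chosen component genuinely satisfy $g\le\alpha$ there, which is exactly the point where openness of $U$, hence lower semicontinuity, is used.
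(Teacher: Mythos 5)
Your proof is correct. Note that the paper itself gives no proof of this proposition --- it is quoted from Karamardian's 1967 paper --- so there is nothing internal to compare against; your argument is a complete, self-contained proof of the classical type. The reduction is sound: when $f(x)\ne f(y)$ semistrict quasiconvexity alone gives the conclusion, and lower semicontinuity enters only in the equal-value case, exactly where it must (the standard counterexample $f(0)=1$, $f(t)=0$ otherwise, is semistrictly quasiconvex but not quasiconvex, and fails only lsc). Your handling of that case --- extracting an open connected component $(a,b)$ of $\{g>\alpha\}$, checking $g\le\alpha$ at its endpoints, and showing that semistrict quasiconvexity forces every point of $(a,b)$ to be a strict maximizer of $g$ over $[a,b]$, which is impossible for two distinct points --- is airtight. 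The only hypothesis you use beyond the stated ones is convexity of the domain, which is implicit in the definition of semistrict quasiconvexity as the paper states it.
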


\begin{proposition} Let $S$ be a convex set, included in some open convex set $\Gamma$ in a Banach space $\E$.
Suppose that $f$ is a continuous function, which is  pseudolinear on $S$.  Then 
$f$ is both semistrictly quasiconvex and semistrictly quasiconcave.
\end{proposition}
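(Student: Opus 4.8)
The plan is to read the conclusion off the affine representation of pseudolinear functions that has just been established in Theorems~\ref{pl-th1} and~\ref{pl-th3}, since that representation already says that along every segment $f$ is a strict convex combination of the two endpoint values whenever these differ. First I would dispose of the degenerate case $x=y$: then neither $f(y)<f(x)$ nor $f(y)>f(x)$ can occur, so both defining implications — of semistrict quasiconvexity and of semistrict quasiconcavity — hold vacuously. So fix $x,y\in S$ with $x\ne y$ and $\lambda\in(0,1)$, put $z(\lambda)=x+\lambda(y-x)\in S$, and invoke Theorem~\ref{pl-th3} (its hypotheses are met: $S$ is convex and contained in the open convex set $\Gamma$, and $f$ is pseudolinear on $S$). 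It provides a number $b>0$, depending on $x,y,\lambda$, with
\[
f[x+\lambda(y-x)]=\lambda b\, f(y)+(1-\lambda b)\, f(x),\qquad 0<\lambda b<1 .
\]

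From this the two properties are immediate. If $f(y)<f(x)$, then $f[z(\lambda)]=f(x)-\lambda b\,(f(x)-f(y))<f(x)$ because $\lambda b>0$; this is exactly the inequality required for semistrict quasiconvexity. If $f(y)>f(x)$, then $f[z(\lambda)]=f(x)+\lambda b\,(f(y)-f(x))>f(x)$, i.e. $-f[z(\lambda)]<-f(x)$, which is the semistrict quasiconvexity inequality applied to $-f$, that is, semistrict quasiconcavity of $f$. Since $x$, $y$, $\lambda$ were arbitrary, $f$ is both semistrictly quasiconvex and semistrictly quasiconcave.

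If one prefers an argument that does not quote Theorem~\ref{pl-th3}, here is a self-contained route. By Lemma~\ref{lema2}, applied to $f$ and to $-f$, the function $f$ is quasiconvex and quasiconcave, hence $\lambda\mapsto f[z(\lambda)]$ is monotone along the segment; if, say, $f(y)<f(x)$ but $f[z(\lambda_0)]=f(x)$ for some $\lambda_0\in(0,1)$, then by continuity $f$ equals $f(x)$ on a maximal initial subsegment $z([0,\beta])$ with $\beta\in(0,1)$ and is strictly smaller on $z((\beta,1])$. Pseudoconvexity at $w=z(\beta)$, compared with points $z(\mu)$ for $\mu>\beta$, gives $\scalpr{w^*}{y-x}<0$ for every $w^*\in\subd f(w)$, whereas Lemma~\ref{lema1} applied at $w$ with the point $x$ (where $f(x)=f(w)$) gives $\scalpr{w^*}{y-x}\ge 0$; since $\subd f(w)\ne\emptyset$ this is a contradiction, and the argument for $-f$ is symmetric. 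The only point that genuinely needs care — and the one place where the distinctive feature of pseudolinearity is used — is the strict two-sidedness $0<\lambda b<1$, equivalently the non-degeneracy of the convex combination; in Theorems~\ref{pl-th1}--\ref{pl-th3} this rests on the strict positivity of the weight function $p$ (equivalently $q=1/p$) of Theorem~\ref{pl-th1} together with $\lambda\in(0,1)$, and on the observation that the borderline case of a subgradient $\xi\in\CRsubd f(z(\lambda))$ with $\scalpr{\xi}{y-x}=0$ forces $f(x)=f[z(\lambda)]=f(y)$ and so is excluded as soon as $f(x)\ne f(y)$. Everything else is routine verification.
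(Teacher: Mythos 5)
Your main argument coincides with the paper's proof, which is stated in a single line: invoke Theorem~\ref{pl-th3} and read both strict inequalities off the identity $f[z(\lambda)]=f(x)+\lambda b\,(f(y)-f(x))$, for which only $\lambda b>0$ (guaranteed by the theorem's statement) is needed; note that the theorem's statement itself only asserts $0<\lambda b\le 1$, so you should cite its proof, not its statement, if you really want the strict upper bound, but your derivation never uses it. The alternative self-contained route via Lemmas~\ref{lema1} and~\ref{lema2} is correct but not what the paper does and not required.
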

\begin{proof}
The claim follows directly from the known theorem that every pseudoconvex function is semistrictly quasiconvex.
\end{proof}

\begin{lemma}\label{lema-s}
A function $f$ defined on a Banach space $\E$ is both semistrictly quasiconvex and semistrictly quasiconcave if and only if
the following implication holds
\[
x\in {\rm dom}\, f,\; y\in {\rm dom}\, f,\;f(y)<f(x),\;\lambda\in(0,1)\quad\Rightarrow\quad f(y)<f[x+\lambda(y-x)]<f(x).
\]
\end{lemma}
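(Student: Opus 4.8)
The statement is a biconditional: $f$ is both semistrictly quasiconvex and semistrictly quasiconcave if and only if, whenever $x,y\in\operatorname{dom} f$ with $f(y)<f(x)$ and $\lambda\in(0,1)$, we have the two strict inequalities $f(y)<f[x+\lambda(y-x)]<f(x)$. The plan is to prove the two directions separately, each by unwinding the definitions and, for the harder direction, exploiting the symmetry in the roles of $x$ and $y$ that is built into the two halves of semistrict quasilinearity.

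For the direction ($\Leftarrow$): assume the displayed implication holds. Semistrict quasiconvexity is immediate, since $f(y)<f[x+\lambda(y-x)]<f(x)$ in particular gives $f[x+\lambda(y-x)]<f(x)$. For semistrict quasiconcavity we must show: if $f(y)>f(x)$ and $\lambda\in(0,1)$ then $f[x+\lambda(y-x)]>f(x)$. Here I would swap the roles: apply the hypothesis with the pair $(y,x)$ in place of $(x,y)$, using that $f(x)<f(y)$, to conclude $f(x)<f[y+\mu(x-y)]<f(y)$ for all $\mu\in(0,1)$. Writing $z(\lambda)=x+\lambda(y-x)=y+(1-\lambda)(x-y)$ and taking $\mu=1-\lambda\in(0,1)$, the left inequality gives $f(x)<f[x+\lambda(y-x)]$, which is exactly what semistrict quasiconcavity requires. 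So the right-hand condition implies both halves.

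For the direction ($\Rightarrow$): assume $f$ is both semistrictly quasiconvex and semistrictly quasiconcave, and fix $x,y\in\operatorname{dom} f$ with $f(y)<f(x)$ and $\lambda\in(0,1)$. Semistrict quasiconvexity applied directly to $(x,y)$ gives $f[x+\lambda(y-x)]<f(x)$, the right inequality. For the left inequality $f(y)<f[x+\lambda(y-x)]$, I would again use the reparametrization $z(\lambda)=y+(1-\lambda)(x-y)$ and apply semistrict quasiconcavity — equivalently, semistrict quasiconvexity of $-f$ — to the pair $(y,x)$: since $(-f)(x)<(-f)(y)$ and $1-\lambda\in(0,1)$, we get $(-f)[y+(1-\lambda)(x-y)]<(-f)(y)$, i.e. $f[x+\lambda(y-x)]>f(y)$. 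Combining the two inequalities yields the claim. I should also remark that $\operatorname{dom} f$ being the implicit convex set on which $f$ is defined (or at least convex, so that the segment $[x,y]$ lies in it) is what makes $x+\lambda(y-x)$ a legitimate point in the domain; since the paper takes $f$ defined on all of $\E$ here, this is automatic. The only mild subtlety — and the one place to be careful — is keeping the direction of the strict inequalities straight when passing between $f$ and $-f$ and between the parameters $\lambda$ and $1-\lambda$; there is no real analytic obstacle, the result is essentially a bookkeeping consequence of the definitions together with the symmetry of a segment under reversal of its endpoints.
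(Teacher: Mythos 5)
Your proof is correct and fills in exactly the details the paper leaves implicit: the paper's own proof is the single sentence ``follows immediately from the definitions,'' and your unwinding via the reparametrization $x+\lambda(y-x)=y+(1-\lambda)(x-y)$ with the roles of $x$ and $y$ swapped is the intended (and essentially only) argument. The sign bookkeeping in both directions checks out.
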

\begin{proof}
The proof follows immediately from the definitions of semistrict quasiconvexity and semistrict quasiconcavity.
\end{proof}

It is interesting which is the widest class of functions, which satisfy the conditions (\ref{pl6}) and $0<\lambda b(x,y,\lambda)<1$.

\begin{theorem}
Let $f$ be a continuous function defined on some convex set in a Banach space $\E$. Then $f$ is both semistrictly quasiconvex and semistrictly quasiconcave if and only if  for all $x\in S$, $y\in S$ and $\lambda\in(0,1)$ there exists a number $b>0$,
 which depend on $x$, $y$, $\lambda$ such that  $0<\lambda b(x,y,\lambda)<1$ and Condition (\ref{pl6}) is satisfied.
\end{theorem}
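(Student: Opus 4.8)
The plan is to prove the two implications separately, proving sufficiency first (it needs no continuity) and then splitting the necessity part into the cases $f(x)=f(y)$ and $f(x)\neq f(y)$.

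\textbf{Sufficiency.} Assume the displayed condition holds. Fix $x,y\in S$ and $\lambda\in(0,1)$, put $t=\lambda\,b(x,y,\lambda)$, and observe $t\in(0,1)$ while, by (\ref{pl6}), $f[x+\lambda(y-x)]=t\,f(y)+(1-t)\,f(x)$. If $f(y)<f(x)$, then replacing $f(y)$ by the larger number $f(x)$ in this convex combination gives $f[x+\lambda(y-x)]<f(x)$, which is exactly semistrict quasiconvexity; and if $f(y)>f(x)$, the same step yields $f[x+\lambda(y-x)]>f(x)$, i.e.\ $-f$ is semistrictly quasiconvex, hence $f$ is semistrictly quasiconcave. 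So this direction is a one-line convex-combination argument.

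\textbf{Necessity.} Now suppose $f$ is both semistrictly quasiconvex and semistrictly quasiconcave; fix $x,y\in S$, $\lambda\in(0,1)$, and set $z=x+\lambda(y-x)$. If $f(x)=f(y)$, I would invoke Proposition \ref{pr-kar}, which applies since a continuous $f$ (and likewise $-f$) is radially lower semicontinuous: it forces both $f$ and $-f$ to be quasiconvex, hence $f(z)\le\max\{f(x),f(y)\}=f(x)$ and $f(z)\ge\min\{f(x),f(y)\}=f(x)$, so $f(z)=f(x)=f(y)$, and then (\ref{pl6}) holds with $b=1$, for which trivially $0<\lambda b=\lambda<1$. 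If $f(x)\neq f(y)$, I would apply Lemma \ref{lema-s}, using the identity $z=x+\lambda(y-x)=y+(1-\lambda)(x-y)$ so that the lemma may be used with either endpoint in the role of ``$x$''; the conclusion is that $f(z)$ lies \emph{strictly} between $f(x)$ and $f(y)$. Then I would simply define $b=(f(z)-f(x))/(\lambda\,[f(y)-f(x)])$: rearranging recovers (\ref{pl6}); the numerator $f(z)-f(x)$ and the factor $f(y)-f(x)$ are nonzero and of the same sign, so $b>0$; and $\lambda b=(f(z)-f(x))/(f(y)-f(x))$ lies in $(0,1)$ precisely because $f(z)$ is strictly between $f(x)$ and $f(y)$.

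The manipulations themselves are routine; the two places where the hypotheses genuinely do work are the likely sticking points. In the case $f(x)=f(y)$, continuity is needed to upgrade semistrict quasiconvexity of $f$ and of $-f$ to honest quasiconvexity (via Proposition \ref{pr-kar}) and thereby force $f$ to be constant on the segment $[x,y]$ — without some semicontinuity this can fail. In the case $f(x)\neq f(y)$, the essential input is the \emph{strict two-sided} inequality of Lemma \ref{lema-s}, which uses both semistrict quasiconvexity and semistrict quasiconcavity together; this strictness is exactly what turns the evident bound $0<\lambda b$ into the full bound $0<\lambda b<1$ and is the reason semistrict quasilinearity (rather than, say, mere quasilinearity) is the right hypothesis.
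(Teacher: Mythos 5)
Your proposal is correct and follows essentially the same route as the paper's own proof: the same explicit formula $b=(f(z)-f(x))/(\lambda[f(y)-f(x)])$, the same appeal to Lemma \ref{lema-s} for the strict two-sided bound when $f(x)\ne f(y)$, the same use of Proposition \ref{pr-kar} to force $b=1$ when $f(x)=f(y)$, and the same convex-combination computation for sufficiency. No gaps.
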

\begin{proof}
Let $f$ be both semistrictly quasiconvex and semistrictly quasiconcave. Consider the function $b(x,y,\lambda)$ defined by
\[
b=\frac{f[x+\lambda(y-x)]-f(x)}{\lambda [f(y)-f(x)]}.
\]
 Let $f(y)<f(x)$ and $0<\lambda<1$. We prove that
\be\label{4}
0<{f[x+\lambda(y-x)]-f(x)}/[f(y)-f(x)]<1.
\ee
It follows from the definition of strict quasiconvexity that $f[x+\lambda(y-x)]<f(x)$. Therefore \[
f[x+\lambda(y-x)]-f(x)/[f(y)-f(x)]>0.
\] 
It follows from Lemma \ref{lema-s} that 
\[
f(y)<f[x+\lambda(y-x)]<f(x).
\] 
Hence (\ref{4}) is satisfied

The case $f(y)>f(x)$ is similar. It follows from semistrict quasiconvexity that $f[x+\lambda(y-x)]<f(y)$. Therefore
$f[x+\lambda(y-x)]-f(x)<f(y)-f(x)$, which implies that (\ref{4}) is also satisfied. Therefore, Condition (\ref{pl6}) holds and
$0<\lambda b<1$.

Let $x\in S$, $y\in S$, $f(x)=f(y)$. By Proposition \ref{pr-kar} $f$ is both quasiconvex and quasiconcave. Therefore, 
\[
f[x+\lambda(y-x)]\le\max\{f(x),f(y)\}=f(x)\quad \textrm{for all}\; \lambda\in [0,1],
\]
\[
f[x+\lambda(y-x)]\ge\min\{f(x),f(y)\}=f(x)\quad \textrm{for all}\; \lambda\in [0,1].
\]
We conclude from here that $f[x+\lambda(y-x)]=f(x)$ for all $\lambda\in[0,1]$.
Hence, (\ref{pl6}) is satisfied with $b=1$ for every $\lambda\in(0,1)$ and $0<\lambda b<1$.

Conversely, suppose that  $x\in S$, $y\in S$, $f(y)<f(x)$, $0<\lambda<1$, and $0<\lambda b(x,y,\lambda)<1$. We prove that $f$ is both semistrictly quasiconvex and semistrictly quasiconcave. Since $0<\lambda b(x,y,\lambda)<1$, then we have 
\[
\lambda\,  b\,  f(y)+(1-\lambda\, b)\, f(x)<\lambda\, b\, f(x)+(1-\lambda\, b)\, f(x)=f(x),
\]
and
\[
\lambda\,  b\,  f(y)+(1-\lambda\, b)\, f(x)>\lambda\,  b\,  f(y)+(1-\lambda\, b)\, f(y)=f(y).
\]
It follows from (\ref{pl6}) that $f(y)<f[x+\lambda(y-x)]<f(x)$. By Lemma \ref{lema-s} $f$ is both semistrictly quasiconvex and semistrictly quasiconcave. 
\end{proof}

The following claim is well known \cite{kor67}.
\begin{proposition}\label{pr10}
Let $S$ be an open convex set in a finite-dimensional space $E$, $f$ be a Fr\'echet differentiable  function, defined on  $S$. Then $f$ is pseudolinear on $S$ if and only if the following sets are equal for all $x\in S$:
$\{y\in S:\; \nabla f(x)(y-x)=0\}$ and $\{y\in S:\; f(y)=f(x)\}$.
\end{proposition}

The next theorem is similar, but different from Proposition \ref{pr10}.

\begin{theorem}\label{th4}
Let $S$ be an convex set in a Banach space $E$, $f$ be a Fr\'echet differentiable semistrictly quasilinear function, defined on some open set $\Gamma$, containing $S$. Then $f$ is pseudolinear on $S$ if and only if the following implication holds:
\be\label{5}
x\in S,\; y\in S,\; \nabla f(x)(y-x)=0\quad\Rightarrow\quad f(y)=f(x).
\ee
\end{theorem}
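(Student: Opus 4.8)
The plan is to prove the two implications separately, each more or less directly from the definitions. For the ``only if'' part, suppose $f$ is pseudolinear on $S$ and pick $x,y\in S$ with $\nabla f(x)(y-x)=0$. Since $f$ is differentiable, $\langle\nabla f(x),v\rangle=\lim_{t\downarrow 0}t^{-1}[f(x+tv)-f(x)]\le f^0(x,v)$ for every $v\in\E$, so $\nabla f(x)\in\partial f(x)$, and likewise $-\nabla f(x)=\nabla(-f)(x)\in\partial(-f)(x)$. If $f(y)<f(x)$, then pseudoconvexity of $f$ forces $\nabla f(x)(y-x)<0$; if $f(y)>f(x)$, i.e.\ $-f(y)<-f(x)$, then pseudoconcavity of $f$ (that is, pseudoconvexity of $-f$) forces $-\nabla f(x)(y-x)<0$. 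Both possibilities contradict $\nabla f(x)(y-x)=0$, so $f(y)=f(x)$, which is (\ref{5}).

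For the ``if'' part, I would first observe that all the hypotheses on $f$ --- Fr\'echet differentiability, semistrict quasiconvexity, semistrict quasiconcavity, and the implication (\ref{5}) --- are preserved when $f$ is replaced by $-f$: condition (\ref{5}) is sign-free, and semistrict quasiconvexity of $-f$ is exactly semistrict quasiconcavity of $f$ and conversely. Hence it suffices to prove that $f$ is pseudoconvex on $S$; running the same argument for $-f$ then gives pseudoconcavity, and therefore pseudolinearity. So let $x,y\in S$ with $f(y)<f(x)$. By semistrict quasiconvexity, $f[x+\lambda(y-x)]<f(x)$ for every $\lambda\in(0,1)$, hence $(f[x+\lambda(y-x)]-f(x))/\lambda<0$ for all such $\lambda$; since the segment $[x,y]$ lies in $S\subseteq\Gamma$ and $f$ is differentiable at $x$, letting $\lambda\downarrow 0$ yields $\nabla f(x)(y-x)\le 0$. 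Were this an equality, (\ref{5}) would give $f(y)=f(x)$, contradicting $f(y)<f(x)$; hence $\nabla f(x)(y-x)<0$, and $f$ is pseudoconvex on $S$.

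The proof is short and I do not expect a genuine obstacle. The two points deserving a little care are the symmetry reduction in the ``if'' part --- one must make sure that semistrict quasiconcavity, and not merely semistrict quasiconvexity, is on hand, so that the pseudoconvexity argument may legitimately be applied to $-f$ --- and the elementary fact that the one-sided difference quotient of $f$ along $[x,y]$ converges to $\nabla f(x)(y-x)$, which is immediate from differentiability. If one prefers to avoid the symmetry step, one can instead repeat the displayed computation verbatim with $<$ and $>$ interchanged and with semistrict quasiconcavity replacing semistrict quasiconvexity, to obtain pseudoconcavity directly.
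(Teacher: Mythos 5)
Your proof is correct and follows essentially the same route as the paper's: the same two-case contradiction argument from pseudoconvexity/pseudoconcavity for the ``only if'' direction, and the same use of semistrict quasiconvexity to get $\nabla f(x)(y-x)\le 0$ followed by exclusion of equality via (\ref{5}) for the ``if'' direction. The only differences are cosmetic --- you make explicit the symmetry reduction to $-f$ and the inclusion $\nabla f(x)\in\partial f(x)$, details the paper leaves implicit.
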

\begin{proof}
Let implication (\ref{5}) be satisfied. We prove that $f$ is pseudolinear. Take arbitrary $x\in S$, $y\in S$ such that $f(y)<f(x)$. By semistrict quasiconvexity we have $f[x+\lambda(y-x)]<f(x)$ for every $\lambda\in(0,1)$. 
Therefore $\nabla f(x)(y-x)\le0$. It follows from (\ref{5}) that the case 
$\nabla f(x)(y-x)=0$ is impossible, because $f(y)<f(x)$. Hence $f$ is pseudoconvex. Using similar arguments we can prove that $f$ is pseudoconcave. Both pseudoconvexity and pseudoconcavity imply that $f$ is pseudolinear.

Suppose that $f$ is pseudolinear. We prove that implication (\ref{5}) holds. Let $x\in S$, $y\in S$, $\nabla f(x)(y-x)=0$, but $f(x)\ne f(y)$. If $f(y)<f(x)$, by pseudoconvexity we have $\nabla f(x)(y-x)<0$, which is a contradiction. If $f(y)>f(x)$,
 by pseudoconcavity we have $\nabla f(x)(y-x)>0$, which is also a contradiction. Therefore (\ref{5}) holds.
\end{proof}

We prove a generalization of this result.

\begin{theorem}
Let $S$ be a convex subset of an open set $\Gamma$ in a Banach space $\E$.
Suppose that $f:\Gamma\to\R$ is a continuous function, which is both semistrictly quasiconvex and semistrictly quasiconcave on $S$ and $\CRsubd f(x)\ne 0$, $\CRsubd (-f)(x)\ne 0$ for all $x\in S$.  Then $f$ is pseudolinear with respect to the Clarke-Rockafellar subdifferential
 if and only if  both implications hold:

\be\label{6}
x\in S,\; y\in S,\;\xi\in\subd f(x),\;\scalpr{\xi}{y-x}=0\quad\Rightarrow\quad f(y)\ge f(x).
\ee
and
\be\label{7}
x\in S,\; y\in S,\;\eta\in\CRsubd (-f)(x),\;\scalpr{\eta}{y-x}=0\quad\Rightarrow\quad f(y)\le f(x).
\ee
\end{theorem}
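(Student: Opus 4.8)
I would prove the equivalence by reducing it to a single statement about one function and then applying that statement twice. The key observation is that $f$ is semistrictly quasiconcave on $S$ if and only if $-f$ is semistrictly quasiconvex on $S$, and that condition (7) is precisely condition (6) written for the function $-f$ (both say that a Clarke-Rockafellar subgradient $\eta$ of $-f$ at $x$ with $\scalpr{\eta}{y-x}=0$ forces $(-f)(y)\ge(-f)(x)$). Since $f$ is pseudolinear with respect to the Clarke-Rockafellar subdifferential exactly when both $f$ and $-f$ are pseudoconvex with respect to it, it is enough to prove: for a continuous, semistrictly quasiconvex function $g$ on $S$ with $\CRsubd g(x)\neq\emptyset$ for all $x\in S$, the function $g$ is pseudoconvex on $S$ if and only if
\[
x\in S,\ y\in S,\ \xi\in\CRsubd g(x),\ \scalpr{\xi}{y-x}=0\ \Rightarrow\ g(y)\ge g(x);
\]
call this implication $(\ast)$. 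The theorem then follows by taking $g=f$, so that $(\ast)$ is (6), and $g=-f$, so that $(\ast)$ is (7).

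The necessity part of this reduced equivalence is a one-line consequence of the definition of pseudoconvexity: if $g$ is pseudoconvex, $\xi\in\CRsubd g(x)$ and $\scalpr{\xi}{y-x}=0$, then the case $g(y)<g(x)$ is impossible, since it would force $\scalpr{\xi}{y-x}<0$; hence $g(y)\ge g(x)$. Applied to $f$ and to $-f$, this already settles the direction ``$f$ pseudolinear $\Rightarrow$ (6) and (7)''.

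The work is in the sufficiency part. I would start from $x,y\in S$ with $g(y)<g(x)$ and $\xi\in\CRsubd g(x)$ and show $\scalpr{\xi}{y-x}<0$. First, continuity of $g$ gives radial lower semicontinuity, so Proposition \ref{pr-kar} makes $g$ quasiconvex on $S$; hence the strict sublevel set $\{z\in S:g(z)<g(x)\}$ is convex and contains $y$. The crucial step is the classical nonsmooth fact that for a lower semicontinuous quasiconvex function every $\xi\in\CRsubd g(x)$ satisfies $\scalpr{\xi}{z-x}\le 0$ whenever $g(z)<g(x)$; in particular $\scalpr{\xi}{y-x}\le 0$. I would either invoke this from \cite{dan99}, or prove it directly by using a sequence $x_n\to x$, $t_n\downarrow 0$ realizing $g^\uparrow(x;y-x)>0$, feeding the directions $y-x_n$ into the definition of $g^\uparrow$, and contradicting quasiconvexity along the segments from $x_n$ to $y$ once $g(x_n)$ has risen above $g(y)$. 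Once $\scalpr{\xi}{y-x}\le 0$ is available, the value $0$ is excluded by $(\ast)$, which would give $g(y)\ge g(x)$; hence $\scalpr{\xi}{y-x}<0$ and $g$ is pseudoconvex. Applying this to $f$ with (6) and to $-f$ with (7) makes both $f$ and $-f$ pseudoconvex, i.e.\ $f$ pseudolinear, completing the converse direction.

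I expect the main obstacle to be exactly the quasiconvex inequality $\scalpr{\xi}{y-x}\le 0$ used above. In contrast with the Fr\'echet differentiable situation of Theorem \ref{th4}, where $\nabla g(x)(y-x)\le 0$ is read off immediately from the fact that $g$ decreases radially along $[x,y]$, here the Clarke-Rockafellar derivative $g^\uparrow(x;y-x)$ perturbs both the base point and the direction, so one genuinely needs the global convexity of the sublevel set rather than merely the behaviour of $g$ on the single segment $[x,y]$; a self-contained proof will require some care with the $\varepsilon$-neighbourhood over directions in the definition of $g^\uparrow$.
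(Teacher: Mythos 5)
Your proposal is correct and follows essentially the same route as the paper: the necessity direction is the same one-line argument, and your sufficiency argument (quasiconvexity from Proposition \ref{pr-kar}, then the fact that $\xi\in\CRsubd g(x)$ and $g(y)<g(x)$ force $\scalpr{\xi}{y-x}\le 0$ via the sequences $x_i\to x$, $t_i\downarrow 0$ in the definition of $g^\uparrow$, with the case $\scalpr{\xi}{y-x}=0$ excluded by the hypothesis) is exactly the paper's contrapositive argument. The only difference is cosmetic: you factor the argument into a single lemma applied to $f$ and $-f$, whereas the paper writes it out twice.
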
 
\begin{proof}
Let $f$ be pseudolinear. We prove implication (\ref{6}). Take arbitrary $x\in S$, $y\in S$, $\xi\in\subd f(x)$ such that $\scalpr{\xi}{y-x}=0$. If $f(y)<f(x)$, then by pseudoconvexity we have $\scalpr{\xi}{y-x}<0$, which is a contradiction. 
we have $\scalpr{\xi}{y-x}>0$, which is also a contradiction. Therefore $f(y)=f(x)$. Thus, implication (\ref{6}) is proved. 
The proof of implication (\ref{7}) is similar.

Conversely, suppose that implications (\ref{6}) and (\ref{7}) are fulfiled. We prove that $f$ is pseudoconvex. Let $x$ and $y$ be arbitrary points from $S$. We prove that 
\[
\scalpr{\xi}{y-x}>0,\; \xi\in\CRsubd f(x)\quad{\rm implies}\quad f(y)\ge f(x).
\]
 Indeed, it follows from $\scalpr{\xi}{y-x}>0$ that $\CR f(x,y-x)>0$. By the definition of the Clarke-Rockafellar derivative, there exist $\varepsilon>0$ and sequences 
$\{x_i\}_{i=1}^\infty$, $x_i\in\Gamma$, $\{t_i\}_{i=1}^\infty$, $t_i>0$ such that $x_i\to x$, $t_i\downarrow 0$ and
\[
\inf_{u\in B(y-x,\varepsilon)}[f(x_i+t_iu)-f(x_i)]/t_i>0, \quad\forall i.
\]
Taking the number $i$ sufficiently large we ensure that $x_i\in B(x,\varepsilon)$. Therefore, we have $y-x_i \in B(y-x,\eps)$ and
$f[x_i+t_i(y-x_i)]>f(x_i)$. Using that $f$ is lower semicontinuous and semistrictly quasiconvex we conclude from Proposition \ref{pr-kar} that it is quasiconvex on $S$. Therefore,
\[
f(x_i)<f[x_i+t_i(y-x_i)]\le f(y).
\]
Hence, $f(x)\le\liminf_{i\to\infty} f(x_i)\le f(y)$. 
It follows from the converse implication that  
\[
x\in S,\; y\in S,\; f(y)<f(x)\quad\textrm{imply}\quad \scalpr{\xi}{y-x}\le 0,\quad\forall\; \xi\in\CRsubd f(x).
\]
Therefore, according to implication (\ref{6}), we obtain that $f$ is pseudoconvex.

We prove that $-f$ is pseudoconvex. Let $x$ and $y$ are arbitrary points from $S$. We prove that 
\[
\scalpr{\eta}{y-x}>0,\; \eta\in\CRsubd (-f)(x)\quad{\rm imply}\quad f(y)\le f(x).
\]
 Indeed, it follows from $\scalpr{\eta}{y-x}>0$ that $\CR {(-f)}(x,y-x)>0$. By the definition of the Clarke-Rockafellar derivative, there exist $\varepsilon>0$ and sequences 
$\{x_i\}_{i=1}^\infty$, $x_i\in\Gamma$, $\{t_i\}_{i=1}^\infty$, $t_i>0$ such that $x_i\to x$, $t_i\downarrow 0$ and
\[
\inf_{u\in B(y-x,\varepsilon)}[-f(x_i+t_iu)+f(x_i)]/t_i>0, \quad\forall i.
\]
Taking the number $i$ sufficiently large we ensure that $x_i\in B(x,\varepsilon)$. Therefore, we have $y-x_i \in B(y-x,\eps)$ and
$f[x_i+t_i(y-x_i)]<f(x_i)$. Using that $f$ is upper semicontinuous and semistrictly quasiconcave, we conclude from Proposition \ref{pr-kar} that it is quasiconcave. Therefore,
\[
f(x_i)>f[x_i+t_i(y-x_i)]\ge f(y).
\]
Hence, $f(x)\ge\limsup_{i\to\infty} f(x_i)\ge f(y)$. 
It follows from the converse implication that  
\[
x\in S,\; y\in S,\; f(y)>f(x)\quad\textrm{imply}\quad \scalpr{\eta}{y-x}\le 0,\quad\forall\; \eta\in\CRsubd (-f)(x).
\]
Therefore, by implication (\ref{7}), we obtain that $-f$ is pseudoconvex, which implies that the function $f$ is pseudolinear.
 
\end{proof}

\begin{corollary}\label{cor1}
Let $S$ be an open convex subset in a Banach space $\E$. Suppose that $f:S\to\R$ is a locally Lipschitz semistrictly quasilinear on $S$ function.  Then $f$ is pseudolinear with respect to the Clarke generalized directioanal derivative if and only if the following implication holds:
\[
x\in S,\; y\in S,\;\xi\in\partial f(x),\;\scalpr{\xi}{y-x}=0\quad\Rightarrow\quad f(y)=f(x).
\]
and
\end{corollary}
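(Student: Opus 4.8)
The plan is to obtain the corollary as an immediate specialization of the preceding theorem to the locally Lipschitz case, where the Clarke--Rockafellar subdifferential collapses onto the Clarke subdifferential and the nonemptiness hypotheses become automatic.

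First I would check that the hypotheses of the preceding theorem hold with $\Gamma:=S$: since $S$ is open and convex, it is a convex subset of the open set $\Gamma=S$; a locally Lipschitz function is continuous on $S$; and ``semistrictly quasilinear on $S$'' means by definition ``both semistrictly quasiconvex and semistrictly quasiconcave on $S$''. Moreover, because $f$ is locally Lipschitz, its Clarke--Rockafellar directional derivative coincides with the Clarke generalized directional derivative $f^0$, so $\CRsubd f(x)=\subd f(x)$ for every $x\in S$, and $\subd f(x)$ is nonempty (and weak-$*$ compact); the same applies to $-f$. Hence $\CRsubd f(x)\ne\emptyset$ and $\CRsubd(-f)(x)\ne\emptyset$ for all $x\in S$, and pseudolinearity with respect to the Clarke--Rockafellar subdifferential is exactly pseudolinearity with respect to the Clarke directional derivative.

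Granting this, the preceding theorem asserts that $f$ is pseudolinear on $S$ if and only if implications (\ref{6}) and (\ref{7}) both hold, and it only remains to recast this pair of implications as the single implication in the statement. Applying the scalar rule $\partial(sf)(x)=s\,\partial f(x)$ with $s=-1$ gives $\CRsubd(-f)(x)=-\subd f(x)$, so $\eta\in\CRsubd(-f)(x)$ precisely when $\eta=-\xi$ with $\xi\in\subd f(x)$, and then $\scalpr{\eta}{y-x}=0$ is the same condition as $\scalpr{\xi}{y-x}=0$. Thus (\ref{7}) reads: for $x,y\in S$ and $\xi\in\subd f(x)$ with $\scalpr{\xi}{y-x}=0$ one has $f(y)\le f(x)$, while (\ref{6}), under the same hypothesis, gives $f(y)\ge f(x)$; conjoining the two yields $f(y)=f(x)$. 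Conversely, the implication ``$x\in S,\ y\in S,\ \xi\in\subd f(x),\ \scalpr{\xi}{y-x}=0\ \Rightarrow\ f(y)=f(x)$'' trivially entails both (\ref{6}) and (\ref{7}), so the equivalence is complete. The argument is essentially bookkeeping; the only input that genuinely has to be invoked is the classical identification of $\CRsubd f$ with the nonempty Clarke subdifferential for locally Lipschitz $f$, which is standard and poses no real obstacle, and I would merely take care that the scalar rule for $\subd(-f)$ is applied with the correct sign so that (\ref{7}) is faithfully translated into a statement about $\subd f$.
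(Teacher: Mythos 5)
Your proposal is correct and follows exactly the route the paper intends: the corollary is stated without proof as an immediate consequence of the preceding theorem, obtained by taking $\Gamma=S$, using that for locally Lipschitz $f$ the Clarke--Rockafellar subdifferential coincides with the (nonempty) Clarke subdifferential, and merging implications (\ref{6}) and (\ref{7}) via $\subd(-f)(x)=-\subd f(x)$ into the single implication $f(y)=f(x)$.
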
 

\begin{remark}
Theorem \ref{th4} is not a consequence of Corollary \ref{cor1}, because the Clarke subdifferential $\partial f(x)$ does not coincides with the gradient $\nabla f(x)$ when the function is Fr\'echet differentiable, but it is not necessarily continuously differentiable.
\end{remark}


\begin{thebibliography}{}

\bibitem{ans99} Q.H. Ansari, S. Schaible, J.C. Yao: $\eta$-pseudolinearity, Riviste di Matematice per le Scienze Economiche e Sociali, v. 22 (1999) 31--39.

\bibitem{baz79} M.S. Bazaraa, C.M. Shetty: Nonlinear Programming - Theory and Algorithms, John Wiley \& Sons, New York (1979).

\bibitem{bec91} C.R. Bector, C. Singh: B-vex functions,  J. Optim. Theory Appl. v. 71, No 2 (1991) 237--253.

\bibitem{bia03} M. Bianchi, N. Hadjisavvas, S. Shaible: On pseudomonotone maps $T$ for which $-T$ is also pseudomonotone, J. Convex Anal. v. 10,  No 1 (2003) 149--168.

\bibitem{cam09} A. Cambini, L. Martein: Generalized Convexity and Optimization, Lecture Notes in Econom. and Math. Systems v. 616, Springer, Berlin (2009).


\bibitem{che84} K.L. Chew, E.U. Choo: Pseudolinearity and efficiency, Math. Program. 28 (1984) 226--239.

\bibitem{cla83} F.H. Clarke:  Optimization and Nonsmooth Analysis, A Wiley-Interscience Publication, John Wiley \& Sons, New York (1983).


\bibitem{dan99} A. Daniilidis, N. Hadjisavvas: On the subdifferentials of quasiconvex and pseudoconvex functions and cyclic monotonicity, J. Math. Anal. Appl. v.  237 (1999) 30--42.

\bibitem{gio04} G. Giorgi, A. Guerraggio, J. Thierfelder: Mathematics of Optimization, Elsevier, Amsterdam (2004).

\bibitem{jogo-3} V.I. Ivanov: Characterizations of pseudoconvex functions and semistrictly quasiconvex ones, J. Glob. Optim. v. 57 (2013) 677--693.

\bibitem{kar67} S. Karamardian: Strictly quasi-convex (concave) functions and duality in mathematical programming,
J. Math. Anal. Appl.  v. 20 (1967) 344--358.

\bibitem{kar76} S. Karamardian: Complementarity over cones with monotone and pseudomonotone maps, J. Optim. Theory Appl. v. 18 (1976) 445--454.

\bibitem{kar90} S. Karamardian, S. Schaible: Seven kinds of monotone maps, J. Optim. Theory Appl. v. 66, No 1 (1990) 37--46. 

\bibitem{kom93} S. Koml\'osi: First and second order characterizations of pseudolinear functions, European J. Oper. Res. v. 67 (1993) 278--286. 

\bibitem{kor67} K.O. Kortanek, J.P. Evans: Pseudoconcave programming and Lagrange regularity, Oper. Res v. 15 (1967) 882--892.

\bibitem{lal07} C.S. Lalitha, M. Mehta: A note of pseudolinearity in terms of bifunctions, Asia Pacific J. Oper. Res. v. 24 (2007) 83--91.

\bibitem{lu05} Q.H. Lu, D.L. Zhu: Some characterizations of locally Lipschitz pseudolinear functions, Math. Appl., v. 18 (2005) 272--278.

\bibitem{man69} O.L. Mangasarian: Nonlinear Programming, Mc Graw-Hill, New York (1969).

\bibitem{mar75} B. Martos: Nonlinear Programming Theory and Methods, Akademiai Kiado, Budapest (1975).

\bibitem{mis15} S.K. Mishra, B.B. Upadhyay : Pseudolinear Functions and Optimization, Chapman \& Hall Book, Boca Raton  (2015).

\bibitem{rap91} T. Rapcs\'ak: On pseudolinear functions, European J. Oper. Res. v. 50 (1991) 353--360.

\bibitem{rez12} M. Rezaci, H. Gazor: Nonsmooth pseudolinearity, Journal of Mathematical Extensions, v. 6, no 4 (2012) 63--77

\bibitem{roc80} R.T. Rockafellar: Generalized directional derivatives and subgradients of nonconvex functions, Canadian Journal of Mathematics, v. 32 (1980)  257--280.

\bibitem{sch82} S. Schaible: Bibliography in fractional programming, Zeitschift fur Operations Research, v. 26, no 1 (1982)  211-241.

\bibitem{sch83} S. Schaible, T. Ibaraki: Fractional programming, European J. Oper. Res, v. 12, no 4 (1983) 325--338  

\bibitem{sta97} I.M. Stancu-Minasian: Fractional Programming. Theory, Methods, Applications, Mathematics and Its Applications, v. 409, Springer, Berlin (1997).



\end{thebibliography}
\end{document}